\newtheorem{theorem}{Theorem}[section]
\newtheorem{lemma}[theorem]{Lemma}
\newtheorem{corollary}[theorem]{Corollary}
\newtheorem{remark}[theorem]{Remark}
\newtheorem{proposition}[theorem]{Proposition}
\newproof{proof}{Proof}
\numberwithin{equation}{section}
\newcommand{\CC}{C_k}
\newcommand{\NN}{\mathbb{N}}
\newcommand{\KK}{\mathcal{K}}
\newcommand{\IR}{\mathbb{R}}
\newcommand{\Ss}{\mathbb{S}}
\newcommand{\e}{\varepsilon}
\newcommand{\cl}{\mathrm{cl}}
\renewcommand{\phi}{\varphi}
\newcommand{\U}{\mathcal U}
\newcommand{\supp}{\mathrm{supp}}
\newcommand{\conv}{\mathrm{conv}}
\begin{document}

\begin{frontmatter}

\title{On reflexivity and the Ascoli property for free locally convex spaces}

\author{S.~Gabriyelyan}
\ead{saak@math.bgu.ac.il}
\address{Department of Mathematics, Ben-Gurion University of the Negev, Beer-Sheva, P.O. 653, Israel}

\begin{abstract}
Let $L(X)$ be  the free locally convex space over a  Tychonoff space $X$.
If $X$ is  Dieudonn\'{e} complete (for example, metrizable), then  $L(X)$ is a reflexive group if and only if $X$ is discrete. Answering a question posed in \cite{GKP} we prove also that $L(X)$ is an Ascoli space if and only if $X$ is a countable discrete space.
\end{abstract}

\begin{keyword}
free locally convex space \sep  reflexive group \sep the Ascoli property \sep Dieudonn\'{e} complete space

\MSC[2010] 46A03 \sep 46A08 \sep  54C35

\end{keyword}

\end{frontmatter}


\section{Introduction}


Let $G$ be an abelian topological group.  A {\em character} $\chi$ of $G$  is a continuous homomorphism from $G$ into the unit circle group $\Ss:=\{ z\in \mathbb{C}: |z|=1\}$. The {\em dual group} $\widehat{G}$ of $G$ is the group of all characters of $G$. We denote by $G^\wedge$ the group $\widehat{G}$ endowed with the compact-open topology. The map $\alpha_G : G\to G^{\wedge\wedge} $, $g\mapsto (\chi\mapsto \chi(g))$, is called {\em the canonical homomorphism}. If $\alpha_G$ is a topological isomorphism the group $G$ is called {\em  reflexive}.

The celebrated Pontryagin--van Kampen Duality Theorem states that every locally compact abelian group is reflexive. It is well known that the class of reflexive groups is closed under taking arbitrary products (\cite{Kap}).  Banach spaces and  reflexive locally convex spaces considered as abelian topological groups are reflexive groups (\cite{Smith}). In particular, the classical Banach space $\ell_1$ is a reflexive group which is not a reflexive space.

For a Tychonoff space $X$, we denote by $C_p(X)$ and $\CC(X)$ the space $C(X)$ of all continuous functions on $X$ endowed with the pointwise topology $\tau_p$ and the compact-open topology $\tau_k$, respectively. Note that $\CC(X)$ is a reflexive locally convex space if and only if $X$ is discrete, see \cite[Theorem~11.7.7]{Jar}. Hern\'{a}ndez and Uspenski\u{\i} proved in \cite{HerUsp} that $\CC(X)$ is a reflexive group for every Ascoli $\mu$-space $X$.
Recall that $X$ is a {\em $\mu$-space } if every functionally bounded subset of $X$ has compact closure, and, following \cite{HerUsp} (see also Remark \ref{rem:Def-Ascoli} below), $X$ is called an {\em  Ascoli space} if every compact subset $\KK$ of $\CC(X)$  is equicontinuous. It is also shown in \cite{HerUsp} that if $C_p(X)$ is a reflexive group, then $X$ is a $P$-space; moreover, the question of whether $C_p(X)$ is a reflexive group is undecidable in $ZFC$ even for simple spaces.

Recall that a Tychonoff space $X$ is {\em Dieudonn\'{e} complete} if the universal uniformity $\U_X$ on $X$ is complete. For numerous characterizations of Dieudonn\'{e} complete spaces see Section 8.5.13 of \cite{Eng}.  The definitions of Fr\'{e}chet--Urysohn, sequential, paracompact and $k$-spaces also can be  found in \cite{Eng}. Relations between the aforementioned topological notions are given in the following diagram

\[
\xymatrix{
\mbox{metric} \ar@{=>}[r]\ar@{=>}[d]   &   \mbox{paracompact} \ar@{=>}[r] &  {\substack{\mbox{Dieudonn\'{e}} \\ \mbox{complete}}}  \ar@{=>}[r] &  \mbox{$\mu$-space} \\
{\substack{\mbox{Fr\'{e}chet--} \\ \mbox{Urysohn}}} \ar@{=>}[r] & \mbox{sequential} \ar@{=>}[r] & \mbox{$k$-space} \ar@{=>}[r] & {\mbox{Ascoli}}
},
\]
and none of these implications is reversible.

One of the most important classes of locally convex spaces is the class of free locally convex spaces. Following \cite{Mar}, the {\em  free locally convex space}  $L(X)$ on a Tychonoff space $X$ is a pair consisting of a locally convex space $L(X)$ and  a continuous map $i: X\to L(X)$  such that every  continuous map $f$ from $X$ to a locally convex space  $E$ gives rise to a unique continuous linear operator ${\bar f}: L(X) \to E$  with $f={\bar f} \circ i$. The free locally convex space $L(X)$ always exists and is essentially unique. For every Tychonoff space $X$, the set $X$ forms a Hamel basis for $L(X)$ and  the map $i$ is a topological embedding, see \cite{Rai,Usp2}, so we shall identify $x\in X$ with its image $i(x)\in L(X)$. Note that a Tychonoff space $X$ is Ascoli if and only if the canonical map $L(X)\to \CC\big(\CC(X)\big)$ is an embedding of locally convex spaces, see \cite{Gabr-LCS-Ascoli}.

It is well known that a reflexive locally convex space $E$ must be barrelled. Since $L(X)$ is a barrelled space if and only if $X$ is discrete (see \cite{GM} or \cite{Gabr-L(X)-Mackey}, for a more general assertion), it easily follows that $L(X)$ is a reflexive locally convex space if and only if $X$ is discrete. As we mentioned above there are non-reflexive locally convex spaces which are reflexive groups. Therefore it is natural to ask: {\em For which Tychonoff spaces $X$, the free locally convex space $L(X)$ is a reflexive group}? Below we obtain a partial answer to this question.

\begin{theorem} \label{t:L(X)-reflexive}
Let $X$ be a Dieudonn\'{e} complete space. Then $L(X)$ is a reflexive group if and only if $X$ is discrete.
\end{theorem}

Let $X$ be a Tychonoff space.
We proved in \cite{Gabr} that $L(X)$ is a $k$-space if and only if $X$ is a countable discrete space (in this case $L(X)$ is a sequential non-Fr\'{e}chet--Urysohn space). This result motivates the following question posed in \cite{GKP}: {\em Is it true that $L(X)$ is an Ascoli space  only if $X$ is a countable discrete space}? Banakh obtained in \cite[Theorem~10.11.9]{Banakh-Survey} a partial answer to this question by showing that if $X$ is a Dieudonn\'{e} complete space such that $L(X)$ is an Ascoli space, then $X$ is countable and discrete.
Let $G$ be an abelian topological group. It is well known (and easy to check) that $\alpha_G$ is continuous if and only if every compact subset of the dual group $G^\wedge$ is equicontinuous.
Therefore, if $\alpha_G$ is {\em not} continuous, then $G$ is not an Ascoli space by Proposition \ref{p:group-Ascoli-continuous} below.
Essentially using this simple remark and the proof of Theorem \ref{t:L(X)-reflexive},  in Proposition \ref{p:Ascoli-L(X)} we give an independent, short and clear proof of Banakh's result. Also the condition of being a Dieudonn\'{e} complete space can be easily removed and, therefore,  we obtain an affirmative answer to the above question.

\begin{theorem} \label{t:Ascoli-L(X)}
For a Tychonoff space $X$, the space $L(X)$ is Ascoli if and only if $X$ is a countable discrete space.
\end{theorem}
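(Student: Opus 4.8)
The plan is to prove the two implications separately and to reduce the nontrivial (``only if'') direction to two independent assertions: that $X$ is discrete and that $X$ is countable.

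The ``if'' direction is immediate from results already available. If $X$ is a countable discrete space, then by the main theorem of \cite{Gabr} the space $L(X)$ is a $k$-space, and every $k$-space is Ascoli by the diagram in the Introduction; hence $L(X)$ is Ascoli.

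For the converse, assume that $L(X)$ is Ascoli. First I would apply Proposition \ref{p:group-Ascoli-continuous} to the abelian topological group $G=L(X)$: being Ascoli, $L(X)$ has, by that proposition, a continuous canonical homomorphism $\alpha_{L(X)}$, which --- as recalled just before the statement --- means that \emph{every compact subset of the dual group $L(X)^\wedge$ is equicontinuous}. The heart of the argument is to show that this already forces $X$ to be discrete; this is carried out in Proposition \ref{p:Ascoli-L(X)}. Here I would reuse the construction from the proof of Theorem \ref{t:L(X)-reflexive}: starting from a non-isolated point of a non-discrete $X$, that proof exhibits a compact subset $\KK\subseteq L(X)^\wedge$ that fails to be equicontinuous. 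The point I would stress is that this particular step never uses Dieudonn\'{e} completeness --- that hypothesis is needed in Theorem \ref{t:L(X)-reflexive} only to make $\alpha_{L(X)}$ open and onto, not to defeat its continuity --- so the construction applies to an arbitrary Tychonoff space $X$. The existence of such a $\KK$ contradicts the equicontinuity of all compact subsets of $L(X)^\wedge$, and therefore $X$ must be discrete.

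It remains to see that a discrete $X$ with $L(X)$ Ascoli is countable. For discrete $X$ the space $L(X)$ is the locally convex direct sum $\mathbb{R}^{(X)}$, so it suffices to invoke the known fact that $\mathbb{R}^{(\kappa)}$ is Ascoli only when $\kappa$ is countable; an uncountable $X$ would then contradict the assumption. I expect the main obstacle of the whole proof to be concentrated in the discreteness step, that is, in extracting from the proof of Theorem \ref{t:L(X)-reflexive} the precise construction of a compact, non-equicontinuous family of characters attached to a non-isolated point and in checking that it survives the removal of Dieudonn\'{e} completeness. This in turn rests on the concrete descriptions of the compact subsets of $L(X)^\wedge$ and of the equicontinuous subsets of the dual $L(X)'=C(X)$ (the latter being governed by continuous pseudometrics on $X$). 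The countability step is comparatively routine once the non-Ascoli property of uncountable direct sums $\mathbb{R}^{(\kappa)}$ is at hand, and it is worth noting that it cannot be obtained from the $\alpha_{L(X)}$ criterion alone, since for discrete $X$ the space $\mathbb{R}^{(X)}$ is a reflexive group and hence has a continuous $\alpha_{L(X)}$ regardless of the cardinality of $X$.
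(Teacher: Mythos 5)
Your overall skeleton (prove discreteness and countability separately; get discreteness from the continuity of $\alpha_{L(X)}$ supplied by Proposition \ref{p:group-Ascoli-continuous} together with the compact non-equicontinuous family built in the proof of Theorem \ref{t:L(X)-reflexive}; get countability from the failure of the Ascoli property for uncountable discrete $X$) is essentially the paper's, and your ``if'' direction and countability step are fine. But the discreteness step, which you yourself identify as the heart of the matter, has two genuine gaps.

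First, the construction of the compact non-equicontinuous set $K=\{g_i\}_{i\in\kappa}\cup\{\mathbf{0}\}$ in the proof of Theorem \ref{t:L(X)-reflexive} is carried out under the standing assumption that \emph{every compact subset of $X$ is finite}: this is what makes $L(X)=(M_c(X),\tau_e)$, and it is exactly what forces $g_i|_C=0$ for all but finitely many $i$ in Claim~1 there. In Theorem \ref{t:L(X)-reflexive} that reduction comes from Proposition \ref{p:L(X)-semireflexive}, i.e.\ from the \emph{bijectivity} of $\alpha_{L(X)}$, which the Ascoli hypothesis does not give you (it only gives continuity). So you must rule out infinite compact subsets of $X$ by some other argument before the construction applies; in the paper this is Step~A of Proposition \ref{p:Ascoli-L(X)} --- the points $a_{n,m}=\frac{1}{m+1}\chi_n+m(\eta_n-\eta_0)$ combined with Proposition \ref{p:Ascoli-sufficient} --- and it is the most technical part of the entire proof, not a consequence of the $\alpha_G$ criterion.

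Second, your claim that the construction ``never uses Dieudonn\'{e} completeness'' is incorrect. Claim~1 in the proof of Theorem \ref{t:L(X)-reflexive} identifies the compact subsets of $E=L(X)$ with the sets $[C;\e]^\circ$ via Proposition \ref{p:bounded-in-Mc}, and identifies $E'$ with $C(X)$ via Corollary \ref{p:Ck-Mc-compatible}; both are stated, and used, only for Dieudonn\'{e} complete $X$. So the hypothesis is used precisely to \emph{defeat} the continuity of $\alpha_{L(X)}$, contrary to what you assert. The paper does not rework the construction for general $X$; instead it adds a reduction you are missing: by Lemma 2.7 of \cite{GGKZ} the Ascoli property passes from a dense subgroup to the whole group, and $L(X)$ is dense in $L(\mu X)=\overline{L(X)}$ by Theorem \ref{t:Free-complete-L}, so one may replace $X$ by its Dieudonn\'{e} completion $\mu X$, apply Proposition \ref{p:Ascoli-L(X)} to $\mu X$, and then pull discreteness and countability back to $X$. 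Without either that reduction or a genuinely completeness-free version of Claims~1 and~2 (and of Step~A), your argument does not close.
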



\section{Proofs} \label{sec:2}


We start from some necessary definitions and notations. Set $\NN:=\{ 1,2,\dots\}$. 
The closure of a subset $A$ of a Tychonoff space $X$ is denoted by $\overline{A}$ or $\cl(A)$. 
The {\em Dieudonn\'{e} completion} $\mu X$ of $X$ is the completion of the uniform space $(X,\U_X)$.
The support of a function $f\in C(X)$ is denoted by $\supp(f)$. Recall that the sets
\[
[K;\e] :=\{ f\in C(X): |f(x)|<\e \; \forall x\in K\},
\]
where $K$ is a compact subset of $X$ and $\e>0$, form a base at zero of the compact-open topology $\tau_k$ of $\CC(X)$.

Let $E$ be a locally convex space. We denote by $E'$ the dual space of $E$. The polar of a subset $A$ of $E$ is denoted by $A^\circ :=\{ \chi\in E': |\chi(x)|\leq 1 \, \forall x\in A\}$.

\begin{remark} \label{rem:Def-Ascoli} {\em
Following \cite{BG}, a  Tychonoff (Hausdorff) space $X$ is called an {\em Ascoli space} if each compact subset $\KK$ of $\CC(X)$ is evenly continuous. In other words, $X$ is Ascoli if and only if the compact-open topology of $\CC(X)$ is Ascoli in the sense of \cite[p.45]{mcoy}. It is noticed in \cite{Gabr-LCS-Ascoli} that this definition coincides with the definition of Ascoli spaces from \cite{HerUsp} given in Introduction.} \qed
\end{remark}

Noble showed in \cite{Nob2} that $\alpha_G$ is continuous if the group $G$ is a $k$-space. Below we generalize this result.

\begin{proposition} \label{p:group-Ascoli-continuous}
If an abelian topological group $G$ is an Ascoli space, then the canonical homomorphism $\alpha_G$ is continuous.
\end{proposition}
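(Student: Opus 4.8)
The plan is to prove the contrapositive-flavored statement directly: assuming $G$ is Ascoli, I want to show that $\alpha_G$ is continuous, which by the remark recalled in the excerpt is equivalent to showing that every compact subset $\KK$ of the dual group $G^\wedge$ is equicontinuous. So the real task reduces to a purely topological/uniform statement: \emph{compact subsets of $G^\wedge$ are equicontinuous}. The key observation connecting the hypothesis to this goal is that characters of $G$ are, in particular, continuous complex-valued functions on $G$, so the dual group $G^\wedge$ sits naturally inside $\CC(G)$ (or $C(G,\Ss)$ with the compact-open topology), and the Ascoli hypothesis is exactly a statement about equicontinuity of compact subsets of $\CC(G)$.

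First I would fix a compact set $\KK \subseteq G^\wedge$ and regard it as a subset of $\CC(G)$ via the inclusion of characters into continuous functions. The compact-open topology on $G^\wedge$ is by definition the subspace topology inherited from $\CC(G,\Ss) \subseteq \CC(G)$, so $\KK$ remains compact when viewed inside $\CC(G)$. The Ascoli property of $G$ then says every compact subset of $\CC(G)$ is equicontinuous (equivalently evenly continuous), and in particular $\KK$ is equicontinuous as a family of functions on $G$. Second, I would unwind what equicontinuity of a set of \emph{characters} at the identity $e \in G$ means: there is a neighborhood $U$ of $e$ so that $|\chi(g) - \chi(e)| = |\chi(g)-1|$ is uniformly small for all $g \in U$ and all $\chi \in \KK$. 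Because the $\chi$ are homomorphisms into $\Ss$, equicontinuity at the single point $e$ upgrades to the genuine equicontinuity notion used for dual groups, namely that $\KK$ is contained in the polar of some neighborhood of $e$; this is the standard fact that for subgroups-of-characters, continuity/equicontinuity is controlled entirely at the identity.

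Third, with $\KK$ equicontinuous in this group-theoretic sense, I would invoke the criterion already recorded in the excerpt: $\alpha_G$ is continuous if and only if every compact subset of $G^\wedge$ is equicontinuous. Since $\KK$ was an arbitrary compact subset of $G^\wedge$, this gives continuity of $\alpha_G$ and finishes the proof. The step I expect to be the main obstacle is the passage from ``equicontinuous as a set of continuous functions on $G$'' (the conclusion handed to us by the Ascoli hypothesis, phrased via even continuity in the sense of \cite{mcoy}) to ``equicontinuous in the sense appropriate for the dual group,'' i.e.\ contained in a polar $U^\circ$. This requires carefully matching the even-continuity definition of Ascoli (Remark~\ref{rem:Def-Ascoli}) with plain equicontinuity, and using the homomorphism property to localize everything at $e$; for abelian topological groups these notions coincide, but writing the reduction cleanly—especially checking that even continuity of $\KK \subseteq \CC(G)$ at points of the form $(e, \chi)$ yields a single neighborhood $U$ independent of $\chi$ with $\KK \subseteq U^\circ$—is where the care is needed.
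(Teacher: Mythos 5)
Your proposal is correct and follows essentially the same route as the paper: reduce to showing that every compact subset $K$ of $G^\wedge$ is equicontinuous, view $K$ as a compact subset of the function space over $G$, apply the Ascoli hypothesis, and localize at the identity using the homomorphism property; the ``main obstacle'' you flag (passing from even continuity to genuine equicontinuity, i.e.\ containment in a polar) is resolved in the paper by a short compactness argument covering $K$ by finitely many neighborhoods $U_{\chi_i}$ and intersecting the corresponding neighborhoods $O_{\chi_i}$ of the identity. The one detail you elide is that characters are $\Ss$-valued while the Ascoli definition concerns real-valued $\CC(G)$, so one needs that compact subsets of $\CC(G,\Ss)$ are evenly continuous (the paper cites Corollary~5.3 of \cite{BG} for this); alternatively one can pass to real and imaginary parts, so this is a cosmetic rather than substantive gap.
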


\begin{proof}
We have to show that every compact subset $K$  of $G^\wedge$ is equicontinuous. Since $G^\wedge$ is a subgroup of the group $\CC(G,\Ss)$  of all continuous functions from $G$ to $\Ss$ endowed with the compact-open topology, we obtain that $K$ is a compact subset of  $\CC(G,\Ss)$. Therefore $K$ is evenly continuous by 
Corollary 5.3 of \cite{BG}. To show that $K$ is equicontinuous, fix $\e>0$. For every $\chi\in K$, choose a neighborhood $U_\chi$ of $\chi\in K$ and a neighborhood $O_\chi$ of zero in $G$ such that
$
|\eta(g)-1|<\e$ for all $g\in O_\chi$ and $\eta\in U_\chi.
$
Since $K$ is compact, we can find $\chi_1,\dots,\chi_n\in K$ such that $\bigcup_{i=1}^n U_{\chi_i} = K$. Set $O:=\bigcap_{i=1}^n O_{\chi_i}$. Then
$
|\eta(g)-1|<\e$ for all $ g\in O $ and $\eta\in K.
$
Thus $K$ is equicontinuous. \qed
\end{proof}

Denote by $M_c(X)$ the space of all real regular Borel measures on $X$ with compact support. It is well known that the dual space of $\CC(X)$ is $M_c(X)$, see \cite[Proposition~7.6.4]{Jar}. For every $x\in X$, we denote by $\delta_x \in M_c(X)$ the evaluation map (Dirac measure), i.e. $\delta_x(f):=f(x)$ for every $f\in C(X)$. The total variation norm of a measure $\mu \in M_c(X)$ is denoted by $\| \mu\|$. Denote by $\tau_e$ the polar topology on $M_c(X)$ defined by the family of all equicontinuous  pointwise bounded subsets of $C(X)$.
We shall use the following deep result of Uspenski\u{\i} \cite{Usp2}.

\begin{theorem}[\cite{Usp2}] \label{t:Free-complete-L}
Let $X$ be a Tychonoff space and let $\mu X$ be the Dieudonn\'{e} completion of $X$. Then the completion $\overline{L(X)}$ of $L(X)$ is topologically isomorphic to $\big(M_c(\mu X),\tau_e\big)$.
\end{theorem}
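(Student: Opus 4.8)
The plan is to realize $L(X)$ as a dense topological vector subspace of the complete locally convex space $\big(M_c(\mu X),\tau_e\big)$, for then the completion of $L(X)$ must coincide with $\big(M_c(\mu X),\tau_e\big)$. I would begin by fixing the duality. By the universal property defining $L(X)$, a continuous linear functional on $L(X)$ is exactly a continuous real function on $X$, so $L(X)'=C(X)$ and the evaluation $\la \mu,f\ra$ pairs $L(X)$ with $C(X)$. The map $\sum_i\lambda_i x_i\mapsto\sum_i\lambda_i\delta_{x_i}$ embeds $L(X)$ into $M_c(X)\subseteq M_c(\mu X)$ as the subspace of finitely supported atomic measures. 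The key initial observation is that, because $\U_X$ is the finest uniformity compatible with the topology of $X$, every $f\in C(X)$ is $\U_X$-uniformly continuous and hence extends uniquely to a continuous function on the completion $\mu X$; thus restriction yields a canonical identification $C(\mu X)=C(X)$. Consequently the same function space serves as the predual in both pairings $\la M_c(X),C(X)\ra$ and $\la M_c(\mu X),C(\mu X)\ra$, and the polar topology $\tau_e$ on $M_c(\mu X)$ is defined by the equicontinuous pointwise bounded subsets of this common space $C(X)$.

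Next I would identify the original topology of $L(X)$ with the topology induced from $\big(M_c(\mu X),\tau_e\big)$. For a continuous pseudometric $d$ on $X$, the set $H_d:=\{f\in C(X):|f(x)-f(y)|\le d(x,y)\ \forall x,y,\ \|f\|_\infty\le1\}$ is equicontinuous and pointwise bounded, and its polar $H_d^\circ$ is the unit ball of the associated Kantorovich--Rubinstein-type seminorm on $L(X)$. The standard description of the free locally convex topology shows that the seminorms arising from all continuous pseudometrics $d$ generate the topology of $L(X)$; hence a base of neighbourhoods of $0$ in $L(X)$ is given by the traces $H^\circ\cap L(X)$ of polars of equicontinuous pointwise bounded sets $H\subseteq C(X)$. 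Conversely every such $H$ is absorbed by some $c\,H_d$, so the free topology of $L(X)$ and the topology induced from $\tau_e$ agree. This reduces the theorem to two assertions: that $L(X)$ is dense in $\big(M_c(\mu X),\tau_e\big)$, and that the latter space is complete.

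For density I would argue that the finite atomic measures supported in $\mu X$ are $\tau_e$-dense, and then that atoms $\delta_y$ with $y\in\mu X$ are approximated by atoms coming from $X$. Given $\mu\in M_c(\mu X)$ with compact support $K$ and an equicontinuous pointwise bounded $H\subseteq C(X)$, equicontinuity makes the restrictions of the members of $H$ to $K$ uniformly equicontinuous, so a sufficiently fine Borel partition of $K$ and a Riemann-type atomic approximation $\sum_i\mu(A_i)\,\delta_{y_i}$ make $\sup_{f\in H}\big|\la\mu-\sum_i\mu(A_i)\delta_{y_i},\,f\ra\big|$ small uniformly over $H$; since each $y\in\mu X$ lies in the $\U_X$-closure of $X$, every $\delta_{y_i}$ is in turn $\tau_e$-approximable by atoms $\delta_x$ with $x\in X$. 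Combining these two approximations shows $L(X)$ is $\tau_e$-dense in $M_c(\mu X)$.

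The main obstacle, and the place where Dieudonn\'{e} completeness of $\mu X$ is indispensable, is the completeness of $\big(M_c(\mu X),\tau_e\big)$. Given a $\tau_e$-Cauchy net $(\mu_\alpha)$, the scalars $\la\mu_\alpha,f\ra$ converge uniformly over each equicontinuous pointwise bounded set, defining a linear functional $\Lambda$ on $C(X)=C(\mu X)$. I would show that $\Lambda$ is continuous for the compact-open topology of $\mu X$, so that by the Riesz-type identification $\big(\CC(\mu X)\big)'=M_c(\mu X)$ it is represented by a regular Borel measure on $\mu X$. The crucial point is that the uniform Cauchy condition confines the mass of this limit functional to a functionally bounded subset of $\mu X$, which, precisely because $\mu X$ is Dieudonn\'{e} complete and hence a $\mu$-space, has compact closure; this is what guarantees that the limit measure has compact support and therefore lies in $M_c(\mu X)$. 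Once completeness and density are in hand, $L(X)$ sits as a dense subspace of the complete space $\big(M_c(\mu X),\tau_e\big)$ carrying the induced topology, and so $\overline{L(X)}$ is topologically isomorphic to $\big(M_c(\mu X),\tau_e\big)$, as claimed.
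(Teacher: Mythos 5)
The paper itself offers no proof of this statement: it is imported verbatim from \cite{Usp2} as a ``deep result of Uspenski\u{\i}'', so your proposal must be judged on its own merits, and it has a genuine error at its central step. You claim that the free topology of $L(X)$ is generated by the polars of the sets $H_d=\{f\in C(X):|f(x)-f(y)|\le d(x,y)\ \forall x,y,\ \|f\|_\infty\le 1\}$, and that every equicontinuous pointwise bounded set $H\subseteq C(X)$ is absorbed by some $cH_d$. The absorption claim is false, because equicontinuous pointwise bounded families need not be \emph{uniformly} bounded. Take $X=\RR$ and let $f_n(x)=n\max\bigl(0,1-|x-2n|\bigr)$, a tent of height $n$ supported on $[2n-1,2n+1]$. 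The family $H=\{f_n:n\in\NN\}$ is pointwise bounded and equicontinuous (every point has a neighborhood on which all but at most two of the $f_n$ vanish identically), yet $\sup_n\|f_n\|_\infty=\infty$, so $H\not\subseteq cH_d$ for any $c>0$ and any continuous pseudometric $d$, since membership in $cH_d$ forces $\|f\|_\infty\le c$. The failure is not cosmetic: the polar $H^\circ$ \emph{is} a neighborhood of zero in the free topology of $L(\RR)$ --- polars of equicontinuous pointwise bounded sets always are, because equicontinuity of $H$ at each point is exactly what makes $i:X\to L(X)$ continuous for the polar topology they generate, and the free topology is the finest locally convex topology with that property --- but $H^\circ$ contains no set $\delta H_d^\circ$: every Dirac measure $\delta_x$ lies in $H_d^\circ$ (members of $H_d$ have sup-norm at most $1$), while $|\delta\,\delta_{2n}(f_n)|=\delta n>1$ once $n>1/\delta$. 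Hence the Lipschitz-ball seminorms generate a topology strictly coarser than the free topology, and your reduction of $\tau_e$ to the family $\{H_d\}$ collapses. The unbounded equicontinuous families are precisely what make $\tau_e$ (and the free topology) as fine as they are; they cannot be traded for Kantorovich--Rubinstein balls.

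The other load-bearing step, completeness of $\bigl(M_c(\mu X),\tau_e\bigr)$, is asserted rather than proved. By Grothendieck's completeness theorem the issue is to show that every linear functional on $C(\mu X)$ whose restriction to each equicontinuous pointwise bounded set is pointwise continuous is represented by a regular Borel measure with compact support; your two sentences (that $\Lambda$ is continuous for the compact-open topology, and that the Cauchy condition ``confines the mass'' to a functionally bounded set) name exactly the two claims that constitute the substance of Uspenski\u{\i}'s theorem, and no argument is offered for either. So the proposal does not prove the theorem: its topology-identification step is refuted by a counterexample, and its completeness step reduces the statement to assertions at least as hard as the statement itself.
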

In what follows we shall also identify elements $x\in X$ with the corresponding Dirac measure $\delta_x \in M_c(X)$.
We need the following corollary of Theorem \ref{t:Free-complete-L} noticed in \cite{Gabr-L(X)-Mackey}.
\begin{corollary}[\cite{Gabr-L(X)-Mackey}] \label{p:Ck-Mc-compatible}
Let $X$ be a  Dieudonn\'{e} complete space. Then $(M_c(X),\tau_e)' =\CC(X)$.
\end{corollary}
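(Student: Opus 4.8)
The plan is to deduce the corollary directly from Uspenski\u{\i}'s Theorem \ref{t:Free-complete-L}, combined with two standard facts of duality theory: that a locally convex space and its completion share the same continuous dual, and that the continuous dual of the free locally convex space $L(X)$ is canonically $C(X)$. Since the substantive analytic content is already carried by Theorem \ref{t:Free-complete-L}, the proof should be short and essentially a matter of unwinding definitions.

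First I would use the hypothesis: because $X$ is Dieudonn\'{e} complete, its Dieudonn\'{e} completion satisfies $\mu X = X$, so Theorem \ref{t:Free-complete-L} yields a topological isomorphism $\overline{L(X)} \cong (M_c(X),\tau_e)$. In particular these two spaces have the same continuous dual, so it suffices to compute $\overline{L(X)}'$.

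Next I would identify $L(X)'$ and then transfer it to the completion. Since $L(X)$ is dense in $\overline{L(X)}$, restriction gives a linear bijection between $\overline{L(X)}'$ and $L(X)'$: every $\varphi\in L(X)'$ is uniformly continuous and hence extends uniquely to a continuous linear functional on $\overline{L(X)}$, while injectivity of restriction follows from density. Thus $(M_c(X),\tau_e)' = \overline{L(X)}' = L(X)'$. To compute $L(X)'$ I would invoke the universal property defining $L(X)$ with target $E=\RR$: every $f\in C(X)$ extends to a unique continuous linear $\bar f$ on $L(X)$, and conversely the restriction to $X$ of any $\varphi\in L(X)'$ is a continuous real function. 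This gives $L(X)' = C(X)$, with pairing $\langle \delta_x, f\rangle = f(x)$, extending by linearity and then by the identification of the completion to integration $\mu\mapsto \int f\,d\mu$ on $M_c(X)$. Combining the identifications yields $(M_c(X),\tau_e)' = C(X) = \CC(X)$; equivalently, $\tau_e$ is a topology of the dual pair $\langle M_c(X), C(X)\rangle$.

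I do not expect a genuine obstacle here, as every step reduces to a routine verification once Theorem \ref{t:Free-complete-L} is granted. The only points requiring mild care are confirming that the restriction map between $\overline{L(X)}'$ and $L(X)'$ is indeed bijective (unique extension along the dense subspace $L(X)$ for surjectivity, density for injectivity) and checking that under the universal property the functional associated with $f$ acts by integration against the measure; both are standard and carry no difficulty.
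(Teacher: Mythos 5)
Your argument is correct and is essentially the paper's own route: the paper states this as an immediate corollary of Theorem \ref{t:Free-complete-L} (citing \cite{Gabr-L(X)-Mackey}) without further proof, and your derivation --- $\mu X=X$ by Dieudonn\'{e} completeness, hence $(M_c(X),\tau_e)\cong\overline{L(X)}$, combined with $\overline{L(X)}'=L(X)'=C(X)$ via unique extension along the dense subspace and the universal property of $L(X)$ applied to $E=\RR$ --- is exactly the intended unwinding. The only point worth making explicit is that the extension of $\bar f$ to $(M_c(X),\tau_e)$ really is $\mu\mapsto\mu(f)$, which follows since the singleton $\{f\}$ is equicontinuous and pointwise bounded, so $\mu\mapsto\mu(f)$ is $\tau_e$-continuous and uniqueness of continuous extensions applies; you flag this and it carries no difficulty.
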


We shall use the following fact, see Lemmas 5.10.2 and 5.10.3 and Theorem 5.10.4 of \cite{NaB}  (this fact is also proved in the ``if'' part of the Ascoli theorem \cite[Theorem 3.4.20]{Eng}).
\begin{proposition} \label{p:Ascoli-Free-Ck}
Let $X$ be a Tychonoff space and $A$ be an equicontinuous pointwise bounded subset of $C(X)$. Then the $\tau_p$-closure ${\bar A}$ of $A$ is $\tau_k$-compact and equicontinuous. 
\end{proposition}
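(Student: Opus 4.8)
The statement is the classical Ascoli--Arzel\`a theorem, so the plan is to follow the standard three-step strategy: first transfer equicontinuity from $A$ to its pointwise closure, then establish $\tau_p$-compactness of $\bar A$ via Tychonoff's theorem, and finally show that $\tau_p$ and $\tau_k$ coincide on the equicontinuous set $\bar A$, so that $\tau_p$-compactness upgrades to $\tau_k$-compactness.

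First I would show that $\bar A$ is equicontinuous. Fix $x_0\in X$ and $\e>0$, and use equicontinuity of $A$ to choose a neighborhood $U$ of $x_0$ with $|f(x)-f(x_0)|<\e/3$ for all $f\in A$ and $x\in U$. Given any $g\in\bar A$ and any $x\in U$, the set of functions $h$ satisfying $|h(x)-g(x)|<\e/3$ and $|h(x_0)-g(x_0)|<\e/3$ is a $\tau_p$-neighborhood of $g$, hence meets $A$; choosing $f$ in this intersection and applying the triangle inequality yields $|g(x)-g(x_0)|<\e$. Since $U$ depends only on $x_0$ and $\e$, the family $\bar A$ is equicontinuous; in particular each $g\in\bar A$ is continuous, so $\bar A\subseteq C(X)$, which already gives one of the two conclusions.

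Next I would establish that $\bar A$ is $\tau_p$-compact. Pointwise boundedness of $A$ means that for each $x\in X$ the set $\{f(x):f\in A\}$ lies in a compact interval $I_x\subseteq\RR$; since evaluation at $x$ is $\tau_p$-continuous, the same bound passes to the closure, so $\bar A\subseteq\prod_{x\in X} I_x$. This product is compact by Tychonoff's theorem and carries exactly the pointwise topology $\tau_p$, and $\bar A$, being $\tau_p$-closed, is a closed subset of it, hence $\tau_p$-compact.

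Finally, the crux of the argument is to show that on the equicontinuous set $\bar A$ the topologies $\tau_p$ and $\tau_k$ coincide; since $\tau_p\subseteq\tau_k$ always holds, only the inclusion $\tau_k|_{\bar A}\subseteq\tau_p|_{\bar A}$ requires work. Given $g\in\bar A$, a compact set $K\subseteq X$, and $\e>0$, I would use equicontinuity to cover $K$ by finitely many neighborhoods $U_{x_1},\dots,U_{x_m}$ on each of which every $h\in\bar A$ oscillates by less than $\e/3$, and then verify that the basic $\tau_p$-neighborhood cut out by the conditions $|h(x_i)-g(x_i)|<\e/3$ is contained, within $\bar A$, in the basic $\tau_k$-neighborhood $g+[K;\e]$. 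This step---trading the single compact set $K$ for finitely many points by means of equicontinuity and a three-$\e$ estimate---is the main point, and I expect it to be the only genuine obstacle. Combining the three steps, $\bar A$ is $\tau_p$-compact and $\tau_p=\tau_k$ on $\bar A$, so $\bar A$ is $\tau_k$-compact and equicontinuous, as claimed.
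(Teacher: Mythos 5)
Your proof is correct and is precisely the classical Ascoli--Arzel\`a argument (equicontinuity passes to the pointwise closure, Tychonoff gives $\tau_p$-compactness, and $\tau_p=\tau_k$ on equicontinuous sets), which is exactly what the paper outsources to Narici--Beckenstein and to the ``if'' part of the Ascoli theorem in Engelking rather than proving itself. The only point worth making explicit is that your first step shows every function in the $\RR^X$-pointwise closure of $A$ is continuous, so that closure coincides with the closure in $C_p(X)$ and is therefore genuinely closed in the compact product $\prod_x I_x$; your argument already delivers this, so nothing is missing.
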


The following proposition is used to prove Theorem \ref{t:L(X)-reflexive}. 
\begin{proposition}[\cite{Gab-Respected}] \label{p:bounded-in-Mc}
Let $X$ be a  Dieudonn\'{e} complete space and let $\KK$ be a $\tau_e$-closed subset of $M_c(X)$. Then  $\KK$ is $\tau_e$-compact if and only if
 there is a compact subset $C$ of $X$ and $\e>0$ such that $\KK\subseteq [C;\e]^\circ$.
\end{proposition}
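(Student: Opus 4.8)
The plan is to reformulate the statement in terms of equicontinuity and then treat the two implications separately. Since the sets $[C;\e]$ (with $C\subseteq X$ compact and $\e>0$) form a base of $\tau_k$-neighbourhoods of $0$ in $\CC(X)$, and since a subset $H$ of the dual $M_c(X)=\CC(X)'$ is equicontinuous precisely when it is contained in the polar of some neighbourhood of $0$, the condition ``$\KK\subseteq[C;\e]^\circ$ for some $C,\e$'' is equivalent to ``$\KK$ is an equicontinuous subset of $M_c(X)$''. Thus I would prove the equivalent assertion: a $\tau_e$-closed set $\KK\subseteq M_c(X)$ is $\tau_e$-compact if and only if it is equicontinuous.

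For the ``if'' direction, fix $C$ and $\e$ with $\KK\subseteq[C;\e]^\circ$. By the Alaoglu theorem $[C;\e]^\circ$ is $\sigma(M_c(X),C(X))$-compact. The key step is to show that on the equicontinuous set $[C;\e]^\circ$ the topology $\tau_e$ coincides with the weak$^*$ topology $\sigma(M_c(X),C(X))$. Let $\tau_{pc}$ denote the topology of uniform convergence on $\tau_k$-precompact subsets of $\CC(X)$. On the one hand, the family of finite subsets of $C(X)$ is among the equicontinuous pointwise bounded sets, so $\sigma(M_c(X),C(X))\le\tau_e$; on the other hand, by Proposition \ref{p:Ascoli-Free-Ck} every equicontinuous pointwise bounded subset of $C(X)$ has $\tau_k$-compact $\tau_p$-closure and is therefore $\tau_k$-precompact, whence $\tau_e\le\tau_{pc}$. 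Combining these with the classical fact that $\sigma(M_c(X),C(X))$ and $\tau_{pc}$ agree on equicontinuous subsets of the dual, I obtain $\sigma(M_c(X),C(X))=\tau_e=\tau_{pc}$ on $[C;\e]^\circ$. Hence $[C;\e]^\circ$ is $\tau_e$-compact, and $\KK$, being a $\tau_e$-closed subset of it, is $\tau_e$-compact.

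For the ``only if'' direction, assume $\KK$ is $\tau_e$-compact, hence $\tau_e$-bounded. By Corollary \ref{p:Ck-Mc-compatible} the dual of $(M_c(X),\tau_e)$ is $\CC(X)$, so $\tau_e$ is a topology compatible with the duality $\langle M_c(X),C(X)\rangle$; by Mackey's theorem the bounded sets are the same for all such topologies, so $\KK$ is $\sigma(M_c(X),C(X))$-bounded. Since $X$ is Dieudonn\'{e} complete it is a $\mu$-space, and hence by the Nachbin--Shirota theorem $\CC(X)$ is barrelled; as in a barrelled space every $\sigma(M_c(X),C(X))$-bounded subset of the dual $M_c(X)$ is equicontinuous, $\KK$ is equicontinuous, i.e. $\KK\subseteq[C;\e]^\circ$ for suitable $C$ and $\e$, as required.

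The main obstacle I anticipate is the heart of the ``if'' direction: upgrading the weak$^*$-compactness of $[C;\e]^\circ$ to $\tau_e$-compactness by proving that the two topologies coincide on it. This is where Proposition \ref{p:Ascoli-Free-Ck} is essential, since it identifies the defining sets of $\tau_e$ as $\tau_k$-precompact and thereby places $\tau_e$ in the sandwich $\sigma\le\tau_e\le\tau_{pc}$. By contrast, Dieudonn\'{e} completeness is used only in the ``only if'' direction, through the $\mu$-space property (barrelledness of $\CC(X)$) together with the dual computation of Corollary \ref{p:Ck-Mc-compatible}.
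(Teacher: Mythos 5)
The paper does not actually prove this proposition: it is imported verbatim from \cite{Gab-Respected}, so there is no in-paper argument to measure yours against. Your proof is, however, correct and self-contained. The opening reformulation of ``$\KK\subseteq[C;\e]^\circ$ for some $C,\e$'' as equicontinuity of $\KK$ in $\CC(X)'=M_c(X)$ is exactly right, since the sets $[C;\e]$ form a base at zero of $\tau_k$. In the ``if'' direction the sandwich $\sigma(M_c(X),C(X))\le\tau_e\le\tau_{pc}$ is justified correctly: the lower bound because finite subsets of $C(X)$ are equicontinuous and pointwise bounded, the upper bound via Proposition \ref{p:Ascoli-Free-Ck}, which makes every defining set of $\tau_e$ relatively $\tau_k$-compact, hence $\tau_k$-precompact. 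The classical fact that $\sigma(E',E)$ and the topology of uniform convergence on precompact sets agree on equicontinuous subsets of $E'$ then collapses the sandwich on $[C;\e]^\circ$, so Alaoglu gives its $\tau_e$-compactness and the $\tau_e$-closedness of $\KK$ finishes the argument; note this half needs no completeness hypothesis on $X$. In the ``only if'' direction the chain $\tau_e$-compact $\Rightarrow$ $\tau_e$-bounded $\Rightarrow$ $\sigma(M_c(X),C(X))$-bounded (Mackey's theorem on bounded sets, legitimised by Corollary \ref{p:Ck-Mc-compatible}) $\Rightarrow$ equicontinuous (Nachbin--Shirota barrelledness of $\CC(X)$, since a Dieudonn\'{e} complete space is a $\mu$-space) is sound, and your closing remark that Dieudonn\'{e} completeness enters only at this last step is accurate.
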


We need also the next simple assertion.
\begin{lemma} \label{l:sequence-L(X)}
Let $X$ be a Tychonoff space containing an infinite compact subset $K$. Let $S=\{ \eta_n:n\in\NN\}$ be a  one-to-one sequence in $K$ and let $\eta_0$ be a cluster point of $S$ such that $\eta_0\not\in S$. Then for each sequence $\{ \lambda_k: k\in\NN\}$ of positive numbers  such that $\sum_{k=1}^\infty \lambda_k =1$, the sequence
\[
\chi_n := \sum_{k=1}^n \lambda_k \eta_k \in L(X), \quad n\in\NN,
\]
converges to $\chi_0 :=\sum_{k=1}^\infty \lambda_k \eta_k$ in $(M_c(X),\tau_e)$.
\end{lemma}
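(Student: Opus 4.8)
The plan is to show directly that $\chi_n \to \chi_0$ by unwinding the definition of the polar topology $\tau_e$. First I would check that $\chi_0$ genuinely lives in $M_c(X)$: since $K$ is compact and hence closed, the closure $\overline{S}$ is a compact subset of $K$, it contains the cluster point $\eta_0$, and $\chi_0 = \sum_{k=1}^\infty \lambda_k \eta_k$ is a positive measure of total mass $\sum_k \lambda_k = 1$ whose support is contained in $\overline{S}$; thus $\chi_0 \in M_c(X)$. Recall that a base of neighborhoods of $0$ for $\tau_e$ is given by the polars $A^\circ$, where $A$ ranges over the equicontinuous pointwise bounded subsets of $C(X)$. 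Hence it suffices to prove that for every such $A$ and every $\e > 0$ there is an $N \in \NN$ with $\chi_n - \chi_0 \in \e A^\circ$ for all $n \geq N$, i.e. $\sup_{f \in A} |(\chi_n - \chi_0)(f)| \leq \e$.

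Fix an equicontinuous pointwise bounded set $A \subseteq C(X)$. For $f \in C(X)$ the difference evaluates as
\[
(\chi_n - \chi_0)(f) = -\sum_{k=n+1}^\infty \lambda_k f(\eta_k),
\]
so $|(\chi_n - \chi_0)(f)| \leq \sum_{k=n+1}^\infty \lambda_k |f(\eta_k)|$. The key step is to bound the values $f(\eta_k)$ uniformly in $f \in A$ and $k \in \NN$. This is where equicontinuity and pointwise boundedness combine: on the compact set $\overline{S}$ an equicontinuous pointwise bounded family is uniformly bounded. Concretely, for each $x \in \overline{S}$ equicontinuity gives a neighborhood $U_x$ on which every $f \in A$ varies by less than $1$, while pointwise boundedness bounds $\sup_{f \in A} |f(x)|$; covering the compact set $\overline{S}$ by finitely many such $U_x$ yields a single constant $M$ with $|f(y)| \leq M$ for all $f \in A$ and all $y \in \overline{S}$, in particular at each $y = \eta_k$. (Alternatively one may invoke Proposition~\ref{p:Ascoli-Free-Ck}: the $\tau_p$-closure of $A$ is $\tau_k$-compact, and restriction to $\overline{S}$ then lands in a bounded subset of $C(\overline{S})$.)

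With this constant in hand the estimate becomes
\[
\sup_{f \in A} |(\chi_n - \chi_0)(f)| \leq M \sum_{k=n+1}^\infty \lambda_k,
\]
and since the series $\sum_{k=1}^\infty \lambda_k$ converges its tails tend to $0$. Thus choosing $N$ so that $\sum_{k=N+1}^\infty \lambda_k \leq \e/M$ (the case $M = 0$ being trivial) completes the argument. I expect the only non-routine point to be the uniform boundedness of $A$ on $\overline{S}$; everything else is a direct tail estimate. Note that the hypotheses that $S$ is one-to-one and that $\eta_0 \notin S$ are not really needed for the convergence itself — they serve only to guarantee that $\chi_0$ is an honest new element whose support is the infinite compact set $\overline{S}$ — so the heart of the lemma is simply that equicontinuous pointwise bounded sets are uniformly bounded on compacta.
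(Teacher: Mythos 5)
Your proof is correct and follows essentially the same route as the paper: reduce to a uniform bound for an equicontinuous pointwise bounded family on a compact set (the paper works on $K$ itself, you on $\overline{S}\subseteq K$, and the paper obtains the bound by citing Proposition~\ref{p:Ascoli-Free-Ck} exactly as in your parenthetical alternative), then finish with the tail estimate $\sum_{k>n}\lambda_k\to 0$. Your closing remark is also accurate: the injectivity of $S$ and $\eta_0\notin S$ are used only later, to ensure $\chi_0\notin L(X)$.
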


\begin{proof}
Let $Q$ be a pointwise bounded and equicontinuous subset of $C(X)$. Then $Q|_K := \{ f|_K: f\in Q\}$ is a pointwise bounded and equicontinuous subset of $C(K)$. Proposition \ref{p:Ascoli-Free-Ck} implies that the pointwise closure of $Q|_K$ is a compact subset of the Banach space $C(K)$. Thus $Q|_K $ is uniformly bounded, i.e., there is a $B>0$ such that
\[
|f(x)|\leq B, \quad \forall x\in K, \; \forall f\in Q.
\]
Choose an $n_0\in \NN$ such that $B\cdot \sum_{k> n_0} \lambda_k <1$. Then
\[
|(\chi_0 -\chi_n)(f)| \leq \sum_{k>n} \lambda_k  |f(\eta_k)|\leq \sum_{k>n} \lambda_k B   <1,  \quad \forall f\in Q,
\]
and hence $\chi_n \in \chi_0 + Q^\circ$ for every $n>n_0$. Thus $\chi_n \to \chi_0$ in $(M_c(X),\tau_e)$ as desired. \qed
\end{proof}

Recall that a locally convex space $E$ is a {\em semireflexive group} if the canonical homomorphism $\alpha_E$ is bijective (but not necessarily continuous).
\begin{proposition} \label{p:L(X)-semireflexive}
If a Tychonoff space $X$ contains an infinite compact subset $K$, then $L(X)$ is not a semireflexive group.
\end{proposition}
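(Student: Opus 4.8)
The plan is to show that the canonical homomorphism $\alpha_{L(X)}$ fails to be surjective; it is automatically injective, since $L(X)$ is a Hausdorff locally convex space and hence its characters separate points. Write $G:=L(X)$ and let $H:=\overline{L(X)}$ be its completion, which by Theorem \ref{t:Free-complete-L} equals $(M_c(\mu X),\tau_e)$. Since $K\subseteq X$ is an infinite compact Hausdorff space, it carries a one-to-one sequence $S=\{\eta_n:n\in\NN\}$, and by compactness $S$ has a cluster point; after removing at most one term I may arrange a cluster point $\eta_0\notin S$. Fixing positive weights $\lambda_k$ with $\sum_k\lambda_k=1$, Lemma \ref{l:sequence-L(X)} produces $\chi_n\in G$ with $\chi_n\to\chi_0:=\sum_{k}\lambda_k\eta_k$ in $(M_c(X),\tau_e)$. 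Thus $\chi_0\in H$, while $\chi_0\notin G$ because it is a measure with infinitely many atoms whereas the elements of $L(X)$ are exactly the finitely supported measures. This $\chi_0$ is the single ``missing'' vector that I will feed into the bidual.

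The mechanism is the standard identification of the dual of a group with the dual of its completion. Because every character of $G$ is uniformly continuous and $G$ is dense in $H$, the restriction map $r\colon H^\wedge\to G^\wedge$ is a bijection whose inverse is extension-by-continuity; moreover $r$ is a homeomorphism for the compact-open topologies, hence a topological isomorphism. Dualizing, the adjoint $r^\wedge\colon G^{\wedge\wedge}\to H^{\wedge\wedge}$ is again a topological isomorphism, and by naturality of the evaluation maps the square relating $\alpha_G$, $\alpha_H$ and the inclusion $\iota\colon G\hookrightarrow H$ commutes, i.e. $\alpha_H\circ\iota=r^\wedge\circ\alpha_G$.

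Now I argue by contradiction. Suppose $\alpha_G$ were onto $G^{\wedge\wedge}$. Applying the isomorphism $r^\wedge$ to the commuting relation yields $\alpha_H(G)=\alpha_H(\iota(G))=r^\wedge(\alpha_G(G))=H^{\wedge\wedge}$. In particular the element $\alpha_H(\chi_0)\in H^{\wedge\wedge}$ would equal $\alpha_H(g)$ for some $g\in G$. Since $H$ is a Hausdorff locally convex space, its characters separate points, so $\alpha_H$ is injective and we conclude $\chi_0=g\in G$, contradicting $\chi_0\notin G$. Hence $\alpha_G$ is not surjective, and $L(X)$ is not a semireflexive group.

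The main obstacle is the second paragraph: the claim that $r\colon H^\wedge\to G^\wedge$ is not merely a bijection but a topological isomorphism, i.e. that extending characters from the dense subgroup $G$ to the completion $H$ is continuous for the compact-open topologies. Here one controls compact subsets of $H$ by totally bounded subsets of $G$ and uses uniform continuity of characters; this is a known fact about duals of completions that I would cite rather than reprove. Everything else — producing $\chi_0$ from Lemma \ref{l:sequence-L(X)}, naturality of $\alpha$, and injectivity of $\alpha_H$ — is routine once this identification is in hand.
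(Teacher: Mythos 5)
There is a genuine gap, and it sits exactly where you flag the ``main obstacle.'' Your first paragraph is fine and matches the paper's setup: both arguments rest on Lemma \ref{l:sequence-L(X)} and on the element $\chi_0\in\overline{L(X)}\setminus L(X)$. The problem is the second paragraph. The restriction map $r\colon H^\wedge\to G^\wedge$ for a dense subgroup $G$ of $H$ is indeed a continuous bijection (characters are uniformly continuous, so they extend to the completion), but the continuity of its inverse for the \emph{compact-open} topologies is not ``a known fact about duals of completions'' --- it is precisely the assertion that the dense subgroup determines the ambient group, which is a theorem of Chasco and Au{\ss}enhofer for \emph{metrizable} groups and is known to fail for non-metrizable groups in general (compact groups of large weight already give counterexamples). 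When $X$ contains an infinite compact set, $L(X)$ is never metrizable, so there is nothing to cite, and the general principle your argument would establish (``semireflexive $+$ dense in a Hausdorff l.c.s.\ completion $\Rightarrow$ complete'') cannot be obtained this cheaply. Concretely, without openness of $r$ you cannot conclude that $r^\wedge$ is onto $H^{\wedge\wedge}$; what you actually need is that the single homomorphism $\eta\mapsto\tilde\eta(\chi_0)$ (extension of $\eta$ evaluated at $\chi_0$) is continuous on $(G^\wedge,\tau_k)$, i.e.\ lies in $G^{\wedge\wedge}$, and this is a pointwise limit of the evaluations at $\chi_n$ whose continuity is exactly the kind of compactness-versus-equicontinuity question the rest of the paper is devoted to. So the step would not survive scrutiny as written.

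For comparison, the paper takes a different and shorter route that avoids the dual of the completion altogether: semireflexivity of $L(X)$ forces the closed convex hull $\overline{\conv}(K)$ to be weakly compact (Theorem 2.1 of \cite{HerUsp}), hence compact in $L(X)$ because $L(X)$ respects compactness (Theorem 1.2 of \cite{Gab-Respected}); the sequence $\{\chi_n\}$ of Lemma \ref{l:sequence-L(X)} lies in $\conv(K)$ and so would have to cluster inside $L(X)$, while in the Hausdorff completion it converges to $\chi_0\notin L(X)$ --- a contradiction. If you want to keep your duality-theoretic strategy, you would have to prove (not cite) that $L(X)$ determines its completion, which is a substantial claim in its own right; otherwise the weak-compactness argument is the workable path.
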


\begin{proof}
Suppose for a contradiction that $L(X)$  is a semireflexive group. Then the closed convex hull $C:=\overline{\conv} (K)$ of $K$  is a weakly compact subset of $L(X)$ by Theorem 2.1 of \cite{HerUsp}, and therefore, $C$ is a compact subset of $L(X)$ by 
Theorem 1.2 of \cite{Gab-Respected}. In particular, the sequence $\{ \chi_n:n\in\NN\}$, constructed in Lemma \ref{l:sequence-L(X)}, has compact closure in $L(X)$. But since $\chi_n \to \chi_0$ in $(M_c(\mu X),\tau_e)=\overline{L(X)}$ by Lemma \ref{l:sequence-L(X)} and $\chi_0 \not\in L(X)$, we obtain a contradiction. \qed
\end{proof}

Now we are ready to prove Theorem \ref{t:L(X)-reflexive}.

\medskip
{\em Proof of Theorem \ref{t:L(X)-reflexive}.}
Assume that $L(X)$ is a reflexive group. We have to prove that $X$ is discrete. By Proposition \ref{p:L(X)-semireflexive}, we shall assume that all  compact subsets of $X$ are finite.

Suppose for a contradiction that the space $X$ is not discrete.
Proposition 2.4 of \cite{Gabr-L(X)-Mackey}
states that there exist an infinite cardinal $\kappa$, a point $z\in X$, a family $\{ g_i\}_{i\in\kappa}$ of continuous functions from $X$ to $[0,2]$ and a family $\{ U_i\}_{i\in\kappa}$ of open subsets of $X$ such that
\begin{enumerate}
\item[{\rm $(\alpha)$}] $\supp(g_i) \subseteq U_i $ for every $i\in\kappa$;
\item[{\rm $(\beta)$}] $U_i\cap U_j=\emptyset $ for all distinct $i,j\in\kappa$;
\item[{\rm $(\gamma)$}] $z\not\in U_i$ for every $i\in\kappa$ and $z\in \cl\big(\bigcup_{i\in\kappa} \{ x\in X: g_i(x)\geq 1\}\big)$.
\end{enumerate}
Set $E:=L(X)=(M_c(X),\tau_e)$.
By Corollary \ref{p:Ck-Mc-compatible}, we have $E'=(M_c(X),\tau_e)'=C(X)$. So we can consider the family $\{ g_i\}_{i\in\kappa}$ as a subset of the dual space $E'=C(X)$ of $E$. Let $\tau_k$ be the compact-open topology on $E'$. Denote by $\mathbf{0}$ the zero function on $X$.

\smallskip
{\em Claim 1. The set $K:= \{ g_i\}_{i\in\kappa} \cup\{ \mathbf{0}\}$ is a compact subset of $(E',\tau_k)$.}

\smallskip
Indeed, fix arbitrarily a compact subset $Z$ of $E$ and $\delta>0$.  We have to show that all but finitely many of functions $g_i$s belong to the neighborhood $[Z;\delta]$ of $\mathbf{0}$ in $(E',\tau_k)$.  By Proposition \ref{p:bounded-in-Mc}, we can assume that $Z=[C;\e]^\circ$, where $C$ is a compact subset of $X$ and $\e>0$. Therefore
\[
\begin{aligned}
Z=[C;\e]^\circ & = \big\{ \mu\in M_c(X): |\mu(f)|\leq 1 \mbox{ for every } f\in [C;\e]\big\} \\
& = \big\{ \mu\in M_c(X): \supp(\mu) \subseteq C \mbox{ and } \| \mu\|\leq 1/\e \big\}.
\end{aligned}
\]
Since the compact set $C$ is finite, $(\alpha)$ and $(\beta)$ imply that $g_i|_C =0$ and hence $g_i\in [Z;\delta]$ for all but finitely many indices $i\in\kappa$. Thus $K$ is compact and Claim 1 is proved.

\smallskip
{\em Claim 2. The compact set $K$ is not equicontinuous.}

\smallskip
Indeed, fix arbitrarily a $\tau_e$-neighborhood
\[
W=[\KK;\e]= \big\{ \mu\in M_c(X): |\mu(f)|<\e  \mbox{ for every }  f\in\KK\big\}
\]
of zero in $M_c(X)$, where $\KK$ is a pointwise bounded and equicontinuous subset of $C(X)$ and $\e>0$. Choose a neighborhood $V$ of the point $z$ in $X$ such that
\[
|f(x)-f(z)|<\e/2, \quad  \mbox{ for every }  x\in V  \mbox{ and each }   f\in\KK.
\]
Then
$
|(\delta_x-\delta_z)(f)|<\e/2$ for every $ x\in V$ and each $f\in\KK.
$
Therefore $\delta_x-\delta_z \in W$ for every $x\in V$. By $(\gamma)$, there are $i\in\kappa$ and $x_i\in U_i$ such that $x_i\in V$ and $g_i(x_i)\geq 1$. Hence (note that $g_i(z)=0$ for every $i\in\kappa$  since $z\not\in U_i$)
\[
|(\delta_{x_i}-\delta_z)(g_i)|=g_i(x_i) - g_i(z)=g_i(x_i) \geq 1.
\]
Thus $K$ is not equicontinuous and Claim 2 is proved.

Now Claims 1 and 2 imply that  $(E',\tau_k)$ contains a compact subset $K$ which is not equicontinuous. Thus $\alpha_{L(X)}$ is not continuous by 
Proposition 2.3 of \cite{HerUsp}. Thus $L(X)$ is not a reflexive group. This contradiction shows that $X$ must be discrete as desired.

\smallskip
Conversely, if $X$ is discrete, then $L(X)$ is complete and barrelled by Theorem \ref{t:Free-complete-L} and Theorem 6.4 of \cite{GM}, respectively. Therefore, $L(X)$ is a reflexive group by Corollary 2.5 of \cite{HerUsp}.  \qed


\smallskip
We shall use the following proposition to show that a space is not Ascoli.
\begin{proposition}[\cite{GKP}] \label{p:Ascoli-sufficient}
Assume  that a Tychonoff space $X$ admits a  family $\U =\{ U_i : i\in I\}$ of open subsets of $X$, a subset $A=\{ a_i : i\in I\} \subseteq X$ and a point $z\in X$ such that
\begin{enumerate}
\item[{\rm (i)}] $a_i\in U_i$ for every $i\in I$;
\item[{\rm (ii)}] $\big|\{ i\in I: C\cap U_i\not=\emptyset \}\big| <\infty$  for each compact subset $C$ of $X$;
\item[{\rm (iii)}] $z$ is a cluster point of $A$.
\end{enumerate}
Then $X$ is not an Ascoli space.
\end{proposition}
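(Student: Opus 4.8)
The plan is to mimic Claims~1 and~2 from the proof of Theorem~\ref{t:L(X)-reflexive}: I shall exhibit a compact subset of $\CC(X)$ that fails to be equicontinuous, which by the definition of an Ascoli space recalled in the Introduction (see also Remark~\ref{rem:Def-Ascoli}) shows that $X$ is not Ascoli. For each $i\in I$, since $X$ is Tychonoff and $U_i$ is an open neighborhood of $a_i$, I fix a continuous function $f_i\colon X\to[0,1]$ with $f_i(a_i)=1$ and $f_i\equiv 0$ on $X\setminus U_i$; in particular $\{x:f_i(x)\neq 0\}\subseteq U_i$. I then set $K:=\{f_i:i\in I\}\cup\{\mathbf{0}\}\subseteq C(X)$, where $\mathbf{0}$ denotes the zero function.

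First I would check that $K$ is $\tau_k$-compact, the argument resting on condition~(ii). Given a basic neighborhood $[C;\delta]$ of $\mathbf{0}$ in $\CC(X)$, with $C\subseteq X$ compact and $\delta>0$, condition~(ii) guarantees that $U_i\cap C\neq\emptyset$ for only finitely many indices $i$; for every other index $i$ one has $C\subseteq X\setminus U_i$, whence $f_i|_C=0$ and $f_i\in[C;\delta]$. Thus every neighborhood of $\mathbf{0}$ contains all but finitely many of the $f_i$, so $\mathbf{0}$ is the unique cluster point of $K$, and a finite-subcover argument then yields compactness of $K$.

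Next I would show that $K$ is not equicontinuous at the point $z$. Applying condition~(ii) to the compact set $\{z\}$, the index set $I_z:=\{i\in I:z\in U_i\}$ is finite, so that $f_i(z)=0$ for every $i\notin I_z$. On the other hand, since $X$ is $T_1$ and $z$ is a cluster point of $A$ by~(iii), every neighborhood $V$ of $z$ contains infinitely many points of $A$, and hence $a_i\in V$ for infinitely many indices $i$; discarding the finitely many indices in $I_z$, I obtain an index $i\notin I_z$ with $a_i\in V$. For this $i$ one has $|f_i(a_i)-f_i(z)|=|1-0|=1$. As $V$ was an arbitrary neighborhood of $z$, taking $\e=1$ shows that $K$ is not equicontinuous at $z$. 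Therefore $\CC(X)$ contains a compact non-equicontinuous set, and $X$ is not an Ascoli space.

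The step I expect to be the main obstacle is the selection in the third paragraph: one must produce a \emph{single} index $i$ for which simultaneously $a_i$ lies in the prescribed neighborhood $V$ of $z$ (guaranteeing the large oscillation $f_i(a_i)=1$) and $z\notin U_i$ (guaranteeing $f_i(z)=0$). This is exactly where conditions~(ii) and~(iii) are played against each other: the cluster-point hypothesis supplies infinitely many admissible indices inside $V$, while the compact-finiteness hypothesis applied to $\{z\}$ rules out only finitely many of them, so an admissible index survives.
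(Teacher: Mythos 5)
Your proof is correct. The paper itself gives no proof of this proposition (it is imported from \cite{GKP}), but your argument --- Urysohn-type functions $f_i$ supported in $U_i$ with $f_i(a_i)=1$, compactness of $\{f_i\}\cup\{\mathbf{0}\}$ from condition (ii), and failure of equicontinuity at $z$ from (iii) combined with the finiteness of $\{i: z\in U_i\}$ --- is exactly the standard argument and mirrors the template the paper does use in Claims 1 and 2 of the proof of Theorem \ref{t:L(X)-reflexive}.
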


\smallskip
The next proposition is proved by T.~Banakh in \cite[Theorem~10.11.9]{Banakh-Survey}, below we provide an independent and much simpler and clearer proof of this assertion. 
\begin{proposition}[\cite{Banakh-Survey}] \label{p:Ascoli-L(X)}
Let $X$ be a Dieudonn\'{e} complete space. If $L(X)$ is an Ascoli space, then $X$ is a countable discrete space.
\end{proposition}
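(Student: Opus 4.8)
The plan is to prove the contrapositive in two stages, following the same strategy that powered the proof of Theorem \ref{t:L(X)-reflexive}. Assume $L(X)$ is an Ascoli space with $X$ Dieudonn\'{e} complete; I must show $X$ is countable and discrete. The guiding principle, stated in the Introduction, is that if $\alpha_G$ fails to be continuous, then $G$ is not Ascoli by Proposition \ref{p:group-Ascoli-continuous}. So whenever I can exhibit a non-equicontinuous compact subset of $(E',\tau_k)=(L(X)^\wedge$-side$)$, I obtain a contradiction with the Ascoli hypothesis.

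First I would show $X$ is discrete. Suppose not. Proposition \ref{p:L(X)-semireflexive} already forces all compact subsets of $X$ to be finite whenever $L(X)$ is even a semireflexive group, but here I only have the Ascoli property, so I cannot invoke semireflexivity directly; instead I would argue that an infinite compact $K\subseteq X$ already yields a contradiction via Proposition \ref{p:Ascoli-sufficient} or via a non-equicontinuous compact set. Concretely, if $X$ is not discrete, I reuse the combinatorial data $(\alpha),(\beta),(\gamma)$ from Proposition 2.4 of \cite{Gabr-L(X)-Mackey}, exactly as in the proof of Theorem \ref{t:L(X)-reflexive}: the set $K=\{g_i\}_{i\in\kappa}\cup\{\mathbf 0\}$ is $\tau_k$-compact in $E'=C(X)$ by Claim 1 of that proof, and it fails to be equicontinuous by Claim 2. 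Since $E'=(M_c(X),\tau_e)'$ and $E=L(X)$, a non-equicontinuous $\tau_k$-compact subset of $E'$ shows $\alpha_{L(X)}$ is discontinuous by Proposition 2.3 of \cite{HerUsp}, whence $L(X)$ is not Ascoli by Proposition \ref{p:group-Ascoli-continuous} --- a contradiction. Therefore $X$ is discrete.

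Once $X$ is discrete, it remains to show $X$ is countable. Here I would use Proposition \ref{p:Ascoli-sufficient} directly against an uncountable discrete $X$. For a discrete space, $M_c(X)$ consists of the finitely supported measures, $L(X)$ is the free vector space on $X$ with its finest locally convex topology, and the relevant family of open sets in $X$ can be taken to be the singletons $U_x=\{x\}$ themselves, with $a_x=x$. Condition (i) is immediate; condition (ii) holds because any compact subset $C$ of the discrete space $X$ is finite, so it meets only finitely many singletons. The real content is condition (iii): I would need a point $z$ (lying in a suitable superspace, or realized inside $L(X)$ as a cluster point of the Dirac deltas) witnessing that an uncountable discrete $X$ produces a cluster point that violates the Ascoli property of $L(X)$. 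The natural move is to transport the situation into $L(X)$, where an uncountable family $\{\delta_x:x\in X\}$ of Dirac measures, together with a convex-combination cluster point of the type built in Lemma \ref{l:sequence-L(X)}, supplies the needed point $z$ in the completion $(M_c(\mu X),\tau_e)$; uncountability is what prevents the cluster point from being absorbed into a single equicontinuous neighborhood.

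The main obstacle I anticipate is the countability step rather than the discreteness step. For discreteness the machinery of Theorem \ref{t:L(X)-reflexive} transfers almost verbatim, since Proposition \ref{p:group-Ascoli-continuous} converts ``not Ascoli'' into the same discontinuity of $\alpha_{L(X)}$ that a reflexivity argument exploits. But to rule out uncountable discrete $X$ I must verify condition (iii) of Proposition \ref{p:Ascoli-sufficient} carefully: I have to produce a genuine cluster point of the chosen set $A$ inside the ambient space to which the proposition applies, and check that the family $\U$ meets every compact set finitely. The delicate point is choosing the ambient space correctly --- applying Proposition \ref{p:Ascoli-sufficient} to $L(X)$ itself (or to a space built from it) rather than to $X$, since a discrete $X$ has no interior cluster points of its own. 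I would therefore expect the bulk of the careful writing to go into identifying the correct $z$, $A$, and $\U$ in $L(X)$ and confirming that an uncountable index set is exactly what forces a cluster point to appear in the completion outside $L(X)$.
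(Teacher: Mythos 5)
Your middle step is sound and matches the paper: once all compact subsets of $X$ are known to be finite, Claims 1 and 2 from the proof of Theorem \ref{t:L(X)-reflexive} produce a non-equicontinuous compact subset of $(E',\tau_k)$, and Proposition \ref{p:group-Ascoli-continuous} (or, as the paper does, the definition of the Ascoli property applied to $\CC\big(L(X)\big)$) yields the contradiction. The problem is that the two remaining steps, which carry essentially all of the technical weight, are not actually proved in your proposal. You correctly observe that Proposition \ref{p:L(X)-semireflexive} is unavailable under the mere Ascoli hypothesis, but you then only assert that an infinite compact $K\subseteq X$ ``already yields a contradiction via Proposition \ref{p:Ascoli-sufficient} or via a non-equicontinuous compact set,'' with no construction. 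This is exactly Step A of the paper's proof and it is the hardest part: one must build the points $a_{n,m}=\frac{1}{m+1}\chi_n+m(\eta_n-\eta_0)$, verify that $0\in\overline{A}\setminus A$, and then manufacture a family $\U$ of open subsets of $L(X)$ meeting each compact set only finitely often --- which the paper achieves by passing to $L(\beta X)$, using that $L(\beta X)$ is Lindel\"{o}f so the closed discrete sequences $S_m$ are $C$-embedded, and combining the resulting discrete families $\mathcal{V}_m^\beta$ with the polar estimate (\ref{equ:L(X)-Ascoli-17}). Without this step the reduction to ``all compacta finite'' --- which your Claims 1 and 2 and the identification $L(X)=(M_c(X),\tau_e)$ both require --- is unjustified.

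Your countability step rests on a mechanism that cannot work. For a discrete space $X$ the space $L(X)$ is complete (Theorem \ref{t:Free-complete-L} together with Theorem 6.4 of \cite{GM}), so there is no completion strictly containing $L(X)$ in which a cluster point of the Dirac measures could appear; moreover $\{\delta_x:x\in X\}$ is a closed discrete subset of $L(X)$ (over a discrete $X$ the free locally convex space carries the finest locally convex topology, in which any subset of a Hamel basis is closed and discrete), hence has no cluster point at all, and Lemma \ref{l:sequence-L(X)} is unavailable because a discrete $X$ has no infinite compact subsets. The paper disposes of this step by citing Theorem 3.2 of \cite{Gabr-LCS-Ascoli}, which states that $L(D)$ is not Ascoli for any uncountable discrete $D$ and whose proof is a genuinely different argument; you would need either to invoke that result or to supply a correct construction, and the one you sketch does not do so.
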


\begin{proof}
We prove the proposition in three steps.

\smallskip
{\em Step A. The space $X$ does not contain infinite compact subsets.}

\smallskip
Suppose for a contradiction that $X$ contains an infinite compact subset $K$. Consider two sequences $\{ \eta_n:n\geq 0\}\subseteq K$ and $\{ \chi_n:n\in\NN\} \subseteq L(X)$ constructed in Lemma \ref{l:sequence-L(X)}.
For every $n,m\in\NN$, set
\[
a_{n,m} := \frac{1}{m+1} \chi_n + m(\eta_n -\eta_0) ,
\]
and put $A:=\{ a_{n,m}: n,m\in\NN\}$ and $z:=0$.

\smallskip

{\em Claim A.1. $0\in \overline{A}\setminus A$.}

\smallskip
For every $n\in\NN$, take a continuous function $f_n: X\to [0,1]$ such that
\begin{equation} \label{equ:L(X)-Ascoli-11}
\eta_n (f_n)= f_n(\eta_n)=1 \; \mbox{ and } \; \eta_0 (f_n)= f_n(\eta_0)=0.
\end{equation}
Then, for every $n\in\NN$, we have
\begin{equation} \label{equ:L(X)-Ascoli-12}
|\chi_n(f_n)|\leq \sum_{k=1}^n \lambda_k \cdot \eta_k(f_n) \leq \sum_{k=1}^n \lambda_k \cdot f_n(\eta_k) \leq \sum_{k=1}^n \lambda_k <1.
\end{equation}
Now (\ref{equ:L(X)-Ascoli-11}) and (\ref{equ:L(X)-Ascoli-12}) imply
\[
|a_{n,m} (f_n)|=\left| \frac{1}{m+1} \chi_n(f_n)+ m\cdot \eta_n(f_n)\right|\geq m-\frac{1}{m+1}>0,
\]
and hence $0\not\in A$. To show that $0\in\overline{A}$ take arbitrary a neighborhood $W$ of $0$ in $(M_c(X),\tau_e)$ and choose an absolutely convex  neighborhood $U$ of $0$ in $(M_c(X),\tau_e)$ such that $3U\subseteq W$. By Lemma \ref{l:sequence-L(X)}, choose an $N\in\NN$ such that $\chi_n \in \chi_0 +U$ for every $n\geq N$, and choose an $m_0\in \NN$ such that $\frac{1}{m_0+1} \chi_0 \in U$. Now, since $\eta_0$ is a cluster point of the sequence $\{ \eta_n:n\in\NN\}$, for the chosen $N$ and $m_0$, take an $n_0 >N$ such that $m_0(\eta_{n_0} - \eta_0) \in U$. Noting that $\frac{1}{m_0+1} U \subseteq U$ we obtain
\[
a_{n_0,m_0} = \frac{1}{m_0+1} \big( \chi_{n_0}-\chi_0 \big) + \frac{1}{m_0+1} \chi_0 + m_0(\eta_{n_0}-\eta_0) \in U+U+U \subseteq W.
\]
Thus $0\in \overline{A}$ and the claim is proved. \qed

\smallskip
To show that $L(X)$ is not Ascoli we shall use Proposition \ref{p:Ascoli-sufficient}. To this end, we have to find also a sequence $\U =\{ U_n : n\in \NN\}$ of open subsets of $L(X)$ such that (i) and (ii) of Proposition \ref{p:Ascoli-sufficient} are satisfied. Below we construct such a family $\U$.

\smallskip
Let $p:X\to \beta X$ be a natural embedding of $X$ into the Stone--\v{C}ech compactification $\beta X$ of $X$. Then $p$ extends uniquely to a continuous linear injective operator from $L(X)$ to $L(\beta X)$ which is also denoted by $p$. For every $n\in\NN$, denote by $F_n : \beta X \to [0,1]$ the unique extension of $f_n$ onto $\beta X$. Then, by   (\ref{equ:L(X)-Ascoli-11}) and (\ref{equ:L(X)-Ascoli-12}), we have
\begin{equation} \label{equ:L(X)-Ascoli-13}
|p(\chi_n)(F_n)|  = \left|\sum_{k=1}^n \lambda_k \cdot F_n\big(p(\eta_k)\big) \right| = \left| \sum_{k=1}^n \lambda_k \cdot f_n(\eta_k)\right| <1,
\end{equation}
\begin{equation} \label{equ:L(X)-Ascoli-14}
|p(\eta_n)(F_n)|  = |F_n\big(p(\eta_n)\big)| =|\eta_n(f_n)|=1,
\end{equation}
and
\begin{equation} \label{equ:L(X)-Ascoli-15}
p(\eta_0)(F_n)=F_n\big(p(\eta_0)\big)=\eta_0(f_n)=0 \quad \mbox{ for every } n\in\NN.
\end{equation}

Set $E:= C(\beta X)$ and $H:=L(\beta X)$. Lemma \ref{l:sequence-L(X)} implies that $p(\chi_n)\to p(\chi_0)\in M_c(\beta X)\setminus H$. Hence, for every $m\in\NN$,  the sequence  $S_m=\big\{ \frac{1}{m+1}p(\chi_n) :n\in\NN\big\}$ is closed and discrete in $H$. Note that $H$ is a Lindel\"{o}f space  by Proposition 5.2 of \cite{GM}. Therefore $S_m$ is $C$-embedded in $H$, i.e. every real-valued function on $S_m$ can be extended to a continuous function on the whole $H$. Take a continuous function $G_m:H\to \IR$ such that $G_m\big(\frac{1}{m+1}p(\chi_n)\big) = n$ for every $n\in\NN$. For every $n,m\in\NN$, set
\[
\widetilde{V}_{n,m}^\beta :=G_m^{-1}\big( (n-0.1,n+0.1)\big) -\frac{1}{m+1}p(\chi_n).
\]
Then, for every $m\in\NN$, the family
\[
\mathcal{V}_m^\beta  :=\left\{ \frac{1}{m+1} p(\chi_n) + \widetilde{V}_{n,m}^\beta : \; n\in\NN\right\}
\]
of open subsets of $H$ is discrete in $H$ (i.e., every $h\in H$ has a neighborhood $U_h$  which intersects with at most one element of the family $\mathcal{V}_m^\beta $).

For every $n,m\in\NN$, set
\begin{equation} \label{equ:L(X)-Ascoli-16}
W_{n,m}^\beta  := \left\{ h\in H: |h(x_n)| < 1 \right\} \; \mbox{ and } \; V_{n,m}^\beta  :=\widetilde{V}_{n,m}^\beta  \cap W_{n,m}^\beta ,
\end{equation}
and put
\[
U_{n,m}^\beta := p\big(a_{n,m}\big) +  V_{n,m}^\beta.
\]

\smallskip
{\em Claim A.2. $\big|\{ (n,m)\in\NN\times\NN: K\cap U_{n,m}^\beta \not=\emptyset\}\big|<\infty$ for every compact subset $K$ of $H$.}

\smallskip
Indeed, let $K$ be a compact subset of $H$. Then $K$ is compact and hence bounded in $\big(M_c(\beta X),\tau_e\big)$. Since $\big(M_c(\beta X),\tau_e\big)'=E$ by Corollary \ref{p:Ck-Mc-compatible}, we obtain that $K$ is weak-$\ast$ bounded in $E'$. As $E$ is a Banach space, $K$ is equicontinuous and hence there is an open neighborhood $\mathcal{O}$ of zero in $E$ such that $K\subseteq \mathcal{O}^\circ$.

Set  $Z:=\{ F_n: n\in\NN\} \subseteq E$. It is clear that $Z$ is a bounded subset of $E$, so we can find a $C>0$ such that $Z\subseteq C\cdot\mathcal{O}$. Then
\begin{equation} \label{equ:L(X)-Ascoli-17}
\mathcal{O}^\circ \subseteq C \cdot Z^\circ = \{ \chi\in M_c(\beta X) : |\chi(F_n)|\leq C  \mbox{ for all }   n\in\NN\}.
\end{equation}
Now, if $\chi=p\big(a_{n,m}\big)+h\in U_{n,m}^\beta$ with $h\in V_{n,m}^\beta,$  (\ref{equ:L(X)-Ascoli-13})-(\ref{equ:L(X)-Ascoli-16}) imply
\[
\begin{aligned}
|\chi(F_n)| & \geq \big|p\big(a_{n,m}\big)(F_n)\big| -|h(F_n)| \\
& \geq m|p(\eta_n)(F_n)| - \frac{1}{m+1} |p(\chi_n)(F_n)| - |h(F_n)|> m-2.
\end{aligned}
\]
Therefore, by (\ref{equ:L(X)-Ascoli-17}), if $m>C+2$ then $K\cap U_{n,m}^\beta \subseteq \mathcal{O}^\circ \cap U_{n,m}^\beta =\emptyset$. For a fixed natural number $m\leq C+2$, $K\cap U_{n,m}^\beta $ is nonempty only for a finite number of $n$  because the family $\mathcal{V}_m^\beta$ is discrete. The claim is proved. \qed

\smallskip
Set $\U :=\{ p^{-1}(U^\beta_{n,m}): n,m\in\NN\}$. Then Claims A.1 and A.2 imply that the families $A$, $\U$ and $z=0\in L(X)$ satisfy (i)-(iii) of Proposition \ref{p:Ascoli-sufficient} as well. Thus $L(X)$ is not an Ascoli space. This contradiction finishes the proof of Step A. Thus we assume in the next step that $X$ does not contain infinite compact subsets and hence $L(X)=(M_c(X),\tau_e)$ by Theorem \ref{t:Free-complete-L}.

\medskip
{\em Step B. The space $X$ is discrete.}

\smallskip
Suppose for a contradiction that $X$ is not discrete. Then Claims 1 and 2 in the proof of Theorem \ref{t:L(X)-reflexive} imply that the space $\CC\big( L(X)\big)$ contains a non-equicontinuous compact subset. Thus the space $L(X)$ is not Ascoli. This contradiction shows that $X$ must be discrete.

\smallskip
{\em Step C.} By Step $B$, the space $X$ is discrete. Therefore $X$ is countable by Theorem 3.2 of \cite{Gabr-LCS-Ascoli} which states that $L(X)$ is not an Ascoli space for every uncountable discrete space $X$. \qed
\end{proof}

Below we prove Theorem \ref{t:Ascoli-L(X)}.

\medskip
{\em Proof of Theorem \ref{t:Ascoli-L(X)}.} 
If $X$ is countable and discrete, then $L(X)$ is sequential (see \cite{Gabr}) and hence an Ascoli space.

Conversely, assume that $L(X)$ is an Ascoli space. Lemma 2.7 of \cite{GGKZ} states that if $H$ is a dense subgroup of a topological group $G$ and has the Ascoli property, then also $G$ is an Ascoli space. Since $L(X)$ is a dense subspace of $L(\mu X)$ by Theorem \ref{t:Free-complete-L}, the last fact implies that $L(\mu X)$ is also an Ascoli space. Now Proposition \ref{p:Ascoli-L(X)} implies that $\mu X$ is a countable discrete space. Thus also $X$ is countable and discrete. \qed



\bibliographystyle{amsplain}

\end{document}